\title[K-unstable Fano manifolds]{Examples of K-unstable Fano manifolds 
with the Picard number one}
\author{Kento Fujita} 
\date{\today}
\subjclass[2010]{Primary 14J45; Secondary 14L24}
\keywords{Fano varieties, K-stability, K\"ahler-Einstein metrics}
\address{Department of Mathematics, Faculty of Science, 
Kyoto University, Kyoto 606-8502, Japan}
\email{fujita@math.kyoto-u.ac.jp}
\newcommand{\pr}{\mathbb{P}}
\newcommand{\Z}{\mathbb{Z}}
\newcommand{\Q}{\mathbb{Q}}
\newcommand{\R}{\mathbb{R}}
\newcommand{\C}{\mathbb{C}}
\newcommand{\F}{\mathbb{F}}
\newcommand{\A}{\mathbb{A}}
\newcommand{\G}{\mathbb{G}}
\newcommand{\ND}{\operatorname{N}^1}
\newcommand{\Eff}{\operatorname{Eff}}
\newcommand{\DIV}{\operatorname{div}}
\newcommand{\id}{\operatorname{id}}
\newcommand{\DF}{\operatorname{DF}}
\newcommand{\vol}{\operatorname{vol}}
\newcommand{\sI}{\mathcal{I}}
\newcommand{\sO}{\mathcal{O}}
\newcommand{\sN}{\mathcal{N}}
\newcommand{\sL}{\mathcal{L}}
\newcommand{\sB}{\mathcal{B}}
\newtheorem{thm}{Theorem}[section]
\newtheorem{lemma}[thm]{Lemma}
\theoremstyle{definition}
\newtheorem{definition}[thm]{Definition}
\newtheorem{remark}[thm]{Remark}
\newtheorem{assumption}[thm]{Assumption}
\newtheorem*{ack}{Acknowledgments}
\begin{document}

\maketitle 

\begin{abstract}
We show that the pair $(X, -K_X)$ is K-unstable 
for a del Pezzo manifold $X$ of degree five with dimension four or five. 
This disprove a conjecture of Odaka and Okada. 
\end{abstract}

\setcounter{tocdepth}{1}
\tableofcontents

\section{Introduction}\label{intro_section}

Let $X$ be a \emph{Fano manifold}, i.e., $X$ is a smooth projective variety 
over the complex number filed $\C$ with $-K_X$ 
ample. We have known that the 
$X$ admits K\"ahler-Einstein metrics if and only if 
the pair $(X, -K_X)$ is K-polystable
(\cite{tian1, don05, CT, stoppa, mab1, mab2, Berman, CDS1, CDS2, CDS3, tian2}). 
K-stabe implies K-polystable, and K-polystable implies K-semistable. 
(See Section \ref{prelim_section} for the definitions.) 
In this article, we consider K-semistability of the pair $(X, -K_X)$ such 
that the Picard number $\rho(X)$ of $X$ is equal to one. 
By \cite[Section 7]{tian1}, a small deformation of the Mukai-Umemura 
threefold does not admit K\"ahler-Einstein metrics. 
However, by \cite[Theorem 1.1 (i)]{LWX}, the $X$ satisfies that the pair $(X, -K_X)$ 
is K-semistable since the Mukai-Umemura threefold admits K\"ahler-Einstein metrics 
\cite[Corollary 1]{donMU}. As in \cite[Conjecture 5.1]{OO} 
(see also \cite[Question 1.12]{CFST}), no example has been 
known so far that a Fano manifold $X$ of $\rho(X)=1$ with $(X, -K_X)$ K-unstable, 
that is, non-K-semistable. 

In this article, we find counterexamples of \cite[Conjecture 5.1]{OO}, that is, 
we find Fano manifolds $X$ of $\rho(X)=1$ with $(X, -K_X)$ K-unstable.

\begin{thm}\label{mainthm}
Let $X$ be a del Pezzo manifold of degree five in the sense of \cite[I, \S 8]{fujita} 
$($see Section \ref{dP_section}$)$. 
If $\dim X=4$ or $5$, then the pair $(X, -K_X)$ is K-unstable, that is, not K-semistable. 
\end{thm}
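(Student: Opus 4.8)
The plan is to use the valuative criterion for K-semistability in terms of the $\beta$-invariant. Writing $n=\dim X$, for every prime divisor $E$ over $X$ lying on a normal birational model $\pi\colon \tilde X\to X$ one sets
$$\beta(E)=A_X(E)\cdot\left((-K_X)^n\right)-\int_0^\infty \vol\!\left(\pi^*(-K_X)-tE\right)dt,$$
where $A_X(E)$ is the log discrepancy, and $(X,-K_X)$ is K-semistable if and only if $\beta(E)\ge 0$ for every such $E$. Consequently, to prove K-instability it suffices to exhibit a \emph{single} prime divisor $E$ over $X$ with $\beta(E)<0$. I would therefore make no attempt at general stability estimates and instead search for one explicit, geometrically natural destabilizing valuation.

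The source of such a valuation is the realization of $X$ as a linear section $\mathrm{Gr}(2,5)\cap L$ of the Plücker-embedded Grassmannian, together with the distinguished subvariety coming from the kernel of the skew forms cutting out $L$. Concretely, for $\dim X=5$ (a hyperplane section defined by a rank-four skew form $\omega$ on $\C^5$ with kernel line $K$) the locus $Z$ of isotropic $2$-planes containing $K$ is a linear $\pr^3\subset X$; for $\dim X=4$ the analogous degeneracy locus of the pencil of forms plays the same role. First I would describe $Z$ precisely, check that it is smooth, and record its codimension $c$, so that for the blow-up $\pi\colon\tilde X=\Bl_Z X\to X$ with exceptional divisor $E$ one has $A_X(E)=c$. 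I expect $Z$ to be a linear subspace along which $-K_X=(n-1)H$ is highly positive, which is exactly the configuration that tends to force $\beta(E)$ negative.

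The heart of the argument is the volume computation. With $-K_X=(n-1)H$ and $H$ the hyperplane class, I would determine the function $t\mapsto\vol(\pi^*(-K_X)-tE)$ by locating the pseudoeffective threshold and carrying out the Zariski/Mori chamber analysis of $\pi^*H-sE$ on $\tilde X$: this amounts to computing the Seshadri-type constant of $H$ along $Z$, identifying the contraction or flip that occurs when $\pi^*H-sE$ leaves the nef cone $\Nef(\tilde X)$, and writing the positive part of the Zariski decomposition in each subinterval explicitly enough to integrate its top self-intersection. I would then assemble $\beta(E)$ from $A_X(E)=c$, the degree $(-K_X)^n=(n-1)^n\cdot 5$, and this integral, and verify directly that it is strictly negative in both $\dim X=4$ and $\dim X=5$.

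The main obstacle is precisely this last step: the blow-up $\tilde X$ need not be nef on the entire relevant segment, so one must understand the birational geometry of $\tilde X$ — the intermediate contractions and the way $Z$ interacts with the families of lines on $X$ — well enough to make the Zariski decomposition, and hence the volume, completely explicit. Once the chamber structure is pinned down the remaining intersection-theoretic computations are routine, and the strict negativity of $\beta(E)$ yields K-instability, contradicting \cite[Conjecture 5.1]{OO}.
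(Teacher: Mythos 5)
Your framework is essentially equivalent to the paper's: the quantity $\eta(S)$ that the paper extracts from the Donaldson--Futaki invariant of its ``basic semi test configuration'' equals $d\cdot\vol_X(-K_X)-\int_0^{\tau(S)}\vol_{\tilde X}(\sigma^*(-K_X)-xE_S)\,dx$ (Remark \ref{vol_rmk}), which is exactly your $\beta(E)$ for the divisor obtained by blowing up a smooth center of codimension $d$. But there are two genuine gaps. First, the implication ``$\beta(E)<0\Rightarrow(X,-K_X)$ is K-unstable'' is not free: one must actually exhibit a semi test configuration with negative Donaldson--Futaki invariant. That is precisely what Section \ref{btc_section} does, by building a flag ideal from the filtration $\{I_S^j\}$, proving semiampleness of $\sL$ (Lemma \ref{stc_lem}) under a finite-generation hypothesis supplied by $\tilde X$ being Fano, and converting $w(k)$ into the volume integral via the geography of models. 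If you intend to invoke the valuative criterion as a black box you must cite or reprove it; it is not among this paper's tools.

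Second, and more seriously, you never carry out the computation that decides the theorem, and for $\dim X=4$ you do not even specify your center. The paper's choice is a \emph{plane} $S\cong\pr^2$ of codimension $n-2$ with $c_2(\sN_{S/X})=2$ (Theorem \ref{dP_thm}); the payoff is that Fujita's classification hands you the second contraction $\pi\colon\Bl_SX\to\pr^n$, so the ample model sequence has exactly two explicit chambers and $\eta(S)=\frac{(n-1)^n}{n+1}\left(2c_2(\sN_{S/X})+(n-4)(9-n)-2n\right)$ falls out of elementary intersection theory. Your candidate for $n=5$, the $\pr^3$ of $2$-planes through $\ker\omega$, does in fact appear to destabilize: one finds $c(\sN_{Z/X})=1+h^2$, hence $\left((\sigma^*L-xE)^{\cdot 5}\right)=5(1-x^2)^2$, and since $I_{Z/X}(1)$ is globally generated the divisor $\sigma^*L-E$ is nef and non-big, so the nef and pseudoeffective thresholds coincide at $x=1$ and $\beta(E)=2\cdot 5120-5120\cdot\tfrac{32}{15}<0$. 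But none of this is in your write-up --- you yourself flag the chamber analysis as ``the main obstacle'' --- and for $n=4$ ``the analogous degeneracy locus'' is not a definition. Until you pin down both centers, establish the chamber structure (or nefness up to the pseudoeffective threshold), and evaluate the integrals, you have a plausible strategy rather than a proof.
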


\begin{remark}[{\cite{isk, F}}]\label{mainrmk}
Fix $n\geq 2$. Let $X$ be a del Pezzo manifold of degree five and $\dim X=n$. 
\begin{enumerate}
\renewcommand{\theenumi}{\arabic{enumi}}
\renewcommand{\labelenumi}{(\theenumi)}
\item\label{mainrmk1}
All the $X$ of fixed $\dim X=n$ 
are isomorphic to each other. Moreover, we have $n\leq 6$. 
\item\label{mainrmk2}
Any such $X$ is isomorphic to a linear section of $\G(5, 2)$ 
embedded by the Pl\"ucker coordinate. Here $\G(5, 2)$ denotes the Grassmann 
variety parameterizing two-dimensional linear subspaces of $\C^5$. 
\end{enumerate}
\end{remark}

\begin{remark}\label{dPnrmk}
Let $X$ be a del Pezzo manifold of degree five and $\dim X=n\geq 2$. 
\begin{enumerate}
\renewcommand{\theenumi}{\arabic{enumi}}
\renewcommand{\labelenumi}{(\theenumi)}
\item\label{dPrmk1}
If $n=6$, then $X\simeq\G(5, 2)$. Thus $X$ is a rational homogeneous manifold. 
Hence $X$ admits K\"ahler-Einstein metrics. In particular, the pair $(X, -K_X)$ is 
K-semistable. 
\item\label{dPrmk2}
If $n=3$, then $X$ admits K\"ahler-Einstein metrics by \cite[Theorem 1.17]{CS}. 
In particular, the pair $(X, -K_X)$ is K-semistable. 
\item\label{dPrmk3}
If $n=2$, then $X$ admits K\"ahler-Einstein metrics by \cite[Theorem 1.1]{tian}. 
In particular, the pair $(X, -K_X)$ is K-semistable. 
(We note that $\rho(X)=5$ if $n=2$.)
\end{enumerate}
\end{remark}

Thus, together with Theorem \ref{mainthm}, we have completely checked 
K-semistability of the pairs $(X, -K_X)$ for del Pezzo manifolds $X$ of degree five. 

We outline the proof of Theorem \ref{mainthm}. We know that, by the results of 
Takao Fujita \cite{F}, a del Pezzo manifold $X$ of degree five with $\dim X\geq 4$ 
has a special plane (see Theorem \ref{dP_thm}). By the same argument in \cite{fjt}, 
we get a semi test configuration from the plane (see Section \ref{btc_section}). 
We calculate the Donaldson-Futaki invariant of the 
semi test configuration in Section \ref{proof_section}.

\begin{ack}
The author is partially supported by a JSPS Fellowship for Young Scientists. 
\end{ack}

In this article, a \emph{variety} means a reduced, irreducible, separated and of 
finite type scheme over the complex number field $\C$. 
For a normal projective variety $X$, the Picard number of $X$ is denoted by $\rho(X)$; 
the closure of the cone spanned by the classes of effective Cartier divisors on $X$ 
in $\ND(X)$ is denoted by 
$\overline{\Eff}(X)$. Moreover, 
For a Weil divisor $D$ on $X$, the 
\emph{divisorial sheaf} on $X$ is denoted by $\sO_X(D)$. More precisely, 
the section of $\sO_X(D)$ on any open subscheme $U\subset X$ is defined by 
the following:
\[
\{f\in k(X)\,|\,\DIV(f)|_U+D|_U\geq 0\},
\]
where $k(X)$ is the function field of $X$. 

Let $\F_1$ be the blowup of $\pr^2$ along a point. Let $e\subset \F_1$ be the 
$(-1)$-curve and let $l\subset \F_1$ be the strict transform of a line on $\pr^2$ 
passing through the center of the blowup. 
For varieties $X_1$ and $X_2$, let $p_i\colon X_1\times X_2\to X_i$ $(i=1$, $2)$ be 
the natural projection morphism.

\section{Preliminaries}\label{prelim_section}

\subsection{K-stability}\label{K_section}

We recall the definition of K-stability. 

\begin{definition}[{see \cite{tian1, don, RT, odk, odk15}}]\label{K_dfn}
Let $X$ be a Fano manifold of dimension $n$. 
\begin{enumerate}
\renewcommand{\theenumi}{\arabic{enumi}}
\renewcommand{\labelenumi}{(\theenumi)}
\item\label{K_dfn1}
If a coherent ideal sheaf $\sI\subset\sO_{X\times\A_t^1}$ satisfies that 
$\sI$ is of the form 
\[
\sI=I_M+I_{M-1}t^1+\cdots+I_1t^{M-1}+(t^M)\subset\sO_{X\times\A_t^1}
\]
($I_M\subset\cdots\subset I_1\subset\sO_X$ is a sequence of coherent ideal sheaves 
of $X$), then we call the $\sI$ a \emph{flag ideal}. 
\item\label{K_dfn2}
Let $r\in\Z_{>0}$ and let $\sI\subset\sO_{X\times\A^1}$ be a flag ideal. 
A \emph{semi test configuration} $(\sB, \sL)/\A^1$ \emph{of} 
$(X, -rK_X)$ \emph{obtained by} $\sI$ is given from: 
\begin{itemize}
\item
the morphism $\Pi\colon\sB\to X\times\A^1$ which is the blowup along $\sI$, 
and $E_\sB\subset\sB$ 
is the Cartier divisor defined by $\sO_\sB(-E_\sB)=\sI\cdot\sO_\sB$,
\item
$\sL$ is the line bundle on $\sB$ defined by the equation 
$\sL:=\Pi^*p_1^*\sO_X(-rK_X)\otimes\sO_\sB(-E_\sB)$, 
\end{itemize}
such that we require that 
\begin{itemize}
\item
$\sI$ is not of the form $(t^M)$, and
\item
$\sL$ is semiample  over $\A^1$.
\end{itemize}
\item\label{K_dfn3}
Let $(\sB, \sL)/\A^1$ be the semi test configuration of $(X, -rK_X)$
obtained by $\sI$. 
For $k\in\Z_{>0}$, set 
\[
w(k):=-\dim\left(\frac{H^0\left(X\times\A^1, p_1^*\sO_X(-krK_X)\right)}
{H^0\left(X\times\A^1, p_1^*\sO_X(-krK_X)\cdot\sI^k\right)}\right).
\]
It is known that $w(k)$ is a polynomial function 
of degree at most $n+1$ for $k\gg 0$. Let $w_{n+1}$(resp.\ $w_n$) be the $(n+1)$-th 
(resp.\ $n$-th) coefficient of $w(k)$. We set the 
\emph{Donaldson-Futaki invariant} $\DF(\sB, \sL)$ of $(\sB, \sL)/\A^1$ with 
\begin{eqnarray*}
\DF(\sB, \sL) & := & \frac{\left((-rK_X)^{\cdot n-1}\cdot(-K_X)\right)}
{2\cdot(n-1)!}w_{n+1}-
\frac{((-rK_X)^{\cdot n})}{n!}w_n\\
 & = & \frac{((-rK_X)^{\cdot n})}{n!}\left(\frac{n}{2r}w_{n+1}-w_n\right).
\end{eqnarray*}
\item\label{K_dfn4}
The pair $(X, -K_X)$ is said to be \emph{K-stable} (resp.\ \emph{K-semistable}) 
if the inequality 
$\DF(\sB, \sL)>0$ (resp.\ $\geq 0$) holds for any $r\in\Z_{>0}$, for any flag ideal 
$\sI$, and for any semi test configuration $(\sB, \sL)/\A^1$ of $(X, -rK_X)$ 
obtained by $\sI$. 
The pair $(X, -K_X)$ is said to be \emph{K-unstable} if the pair 
$(X, -K_X)$ is not K-semistable. 
\end{enumerate}
\end{definition}

\subsection{Del Pezzo manifolds of degree five}\label{dP_section}

We recall the definition and properties of del Pezzo manifolds. 

\begin{definition}[{\cite[I, \S 8]{fujita}}]\label{dP_dfn}
Let $(X, L)$ be an $n$-dimensional polarized manifold, that is, $X$ is an 
$n$-dimensional smooth projective variety and $L$ is an ample divisor on $X$. 
Assume that $n\geq 2$ and 
let $d$ be a positive integer. 
The pair $(X, L)$ is said to be a \emph{del Pezzo manifold} of degree $d$ if 
$-K_X\sim (n-1)L$ and $(L^{\cdot n})=d$. We often omit the polarization $L$ 
since the divisor $L$ is unique up to linear equivalence. 
By \cite[(6.2.3)]{fujita}, if $d\geq 3$ then the complete linear system $|L|$ defines 
an embedding $X\hookrightarrow\pr^{n+d-2}$. We sometimes write 
$X\subset\pr^{n+d-2}$ in place of $(X, L)$ if $d\geq 3$. 
\end{definition}

The following structure results are essential in this article. 

\begin{thm}[{\cite{F}}]\label{dP_thm}
Let $X\subset\pr^{n+3}$ be a del Pezzo manifold of dimension $n$ and degree five. 
Let $L$ be a divisor on $X$ with $-K_X\sim (n-1)L$. 
\begin{enumerate}
\renewcommand{\theenumi}{\arabic{enumi}}
\renewcommand{\labelenumi}{(\theenumi)}
\item\label{dP_thm1}
Assume that $n=4$. Then there exists a plane $S\subset X$ such that 
$c_2(\sN_{S/X})=2$. Let $\sigma\colon\tilde{X}\to X$ be the blowup along $S$ and 
let $E_S$ be the exceptional divisor. 
Then the complete linear system $|\sigma^*L-E_S|$ gives a birational surjection 
$\pi\colon\tilde{X}\to\pr^4$. Moreover, the push-forward $H:=\pi_*E_S$ is a hyperplane 
in $\pr^4$ and the morphism $\pi$ is the blowup along 
a twisted cubic curve $C$ on $H$. 
\item\label{dP_thm2}
Assume that $n=5$. Then there exists a plane $S\subset X$ such that 
$c_2(\sN_{S/X})=2$. Let $\sigma\colon\tilde{X}\to X$ be the blowup along $S$ and 
let $E_S$ be the exceptional divisor. 
Then the complete linear system $|\sigma^*L-E_S|$ gives a birational surjection 
$\pi\colon\tilde{X}\to\pr^5$. Moreover, the push-forward $H:=\pi_*E_S$ is a hyperplane 
in $\pr^5$, the morphism $\pi$ is the blowup along 
a smooth surface $C$ on $H$ with $C\simeq\F_1$ and the embedding 
$C\subset H$ is obtained by the complete linear system $|e+2l|$. 
\item\label{dP_thm3}
Assume that $n=6$. Then there exists a plane $S\subset X$ such that 
$c_2(\sN_{S/X})=3$. Let $\sigma\colon\tilde{X}\to X$ be the blowup along $S$ and 
let $E_S$ be the exceptional divisor. 
Then the complete linear system $|\sigma^*L-E_S|$ gives a birational surjection 
$\pi\colon\tilde{X}\to\pr^6$. Moreover, the push-forward $H:=\pi_*E_S$ is a hyperplane 
in $\pr^6$, the morphism $\pi$ is the blowup along 
a smooth threefold $C$ on $H$ with $C\simeq\pr^1\times\pr^2$ and the embedding 
$C\subset H$ is obtained by the complete linear system $|\sO_{\pr^1\times\pr^2}(1,1)|$. 
\end{enumerate}
\end{thm}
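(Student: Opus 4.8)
The plan is to realise $X$ as a linear section of the Grassmannian, to locate the plane $S$ as a sub-Grassmannian, and then to recognise the birational picture as the linear projection of $X$ away from the span of $S$.

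By Remark~\ref{mainrmk} we may take $X=\G(5,2)\cap\Lambda\subset\pr^9$ for a linear subspace $\Lambda$ of codimension $6-n$ in the Pl\"ucker space, with $L=\sO_X(1)$ the restricted Pl\"ucker polarisation. Fix a three-dimensional subspace $W\subset\C^5$ and set
\[
S:=\{[V]\in\G(5,2)\mid V\subset W\},
\]
a $\beta$-plane isomorphic to $\pr^2$ and linearly embedded in $\pr^9$. The $\beta$-planes form a six-dimensional family parametrised by $W\in\G(5,3)$, and requiring $\langle S\rangle\subset\Lambda$ imposes $3(6-n)$ conditions, so the $\beta$-planes contained in $X$ form a family of dimension $3n-12$. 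For $n=4,5,6$ this is non-negative, and one checks (on the homogeneous model, using the uniqueness in Remark~\ref{mainrmk}) that the family is non-empty; fix one such $S\subset X$.

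First I would compute $\sN_{S/X}$. Writing $\sU$, $\sQ$ for the tautological sub- and quotient bundles on $\G(5,2)$ and using $T_{\G(5,2)}\cong\sU^\vee\otimes\sQ$, the splitting $\sQ|_S\cong (W/\sU|_S)\oplus((\C^5/W)\otimes\sO_S)$ peels off the tangent directions of $S$ and yields the clean identity $\sN_{S/\G(5,2)}\cong(\sU^\vee|_S)^{\oplus 2}$. On $S\cong\pr^2$ one has $c(\sU^\vee|_S)=1+H+H^2$ with $H$ the hyperplane class, while $\sN_{X/\G(5,2)}|_S\cong\sO_{\pr^2}(1)^{\oplus(6-n)}$ since $X$ is a linear section. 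From the normal bundle sequence of $S\subset X\subset\G(5,2)$,
\[
c(\sN_{S/X})=\frac{(1+H+H^2)^2}{(1+H)^{6-n}},
\]
and reading off the coefficient of $H^2$ gives $c_2(\sN_{S/X})=2,2,3$ for $n=4,5,6$, as asserted.

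Next comes the birational geometry. By Definition~\ref{dP_dfn} we have $X\subset\pr^{n+3}$, and the sections of $|\sigma^*L-E_S|$ are exactly the hyperplanes of $\pr^{n+3}$ through the span $\langle S\rangle\cong\pr^2$; hence the associated morphism $\pi$ resolves the linear projection of $X$ away from $\langle S\rangle$, whose target is $\pr^{(n+3)-3}=\pr^n$. Since $\langle S\rangle\subset X$, a general $\pr^3$ through $\langle S\rangle$ meets $X$ in $S$ together with a single residual point, which translates into $((\sigma^*L-E_S)^{\cdot n})=1$; thus $\pi\colon\tilde X\to\pr^n$ is birational. The exceptional divisor $E_S$ is then carried onto a hyperplane $H=\pi_*E_S$, and the locus over which $\pi$ is not an isomorphism is a smooth codimension two subvariety $C\subset H$; resolving the inverse projection shows that $\pi$ is the blow-up of $\pr^n$ along $C$, while $\sigma$ is the blow-up of $X$ along $S$.

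The main obstacle is to prove that $\pi$ really is a smooth blow-up and to identify $C$ with its embedding. This rests on analysing the lines of $X$ meeting $S$, equivalently the fibre structure of $E_S\to S$, i.e.\ of $\Bl_C H\to\pr^2$. In each case $C\subset H\cong\pr^{n-1}$ turns out to be the codimension two variety of minimal degree three, a rational normal scroll, which fixes both its isomorphism type and the very ampleness of its embedding: a twisted cubic for $n=4$; the surface $\F_1$ embedded by $|e+2l|$ for $n=5$; and the Segre threefold $\pr^1\times\pr^2$ embedded by $|\sO_{\pr^1\times\pr^2}(1,1)|$ for $n=6$. Verifying the smoothness of $C$ and of the contraction, equivalently that $E_S=\pr(\sN_{S/X}^\vee)$ maps isomorphically onto $\Bl_C H$, is where the explicit Grassmannian geometry does the real work.
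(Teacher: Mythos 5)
The paper itself gives no proof of this statement: it is quoted verbatim from Fujita's classification \cite{F}, so there is no internal argument to compare yours against, and your attempt has to be judged on its own. Its first half is solid and correct: realizing $X$ as a linear section of $\G(5,2)$, taking $S$ to be a $\beta$-plane $\{[V]\mid V\subset W\}$, the identification $\sN_{S/\G(5,2)}\cong(\sU^\vee|_S)^{\oplus 2}$, and the formula $c(\sN_{S/X})=(1+H+H^2)^2/(1+H)^{6-n}$ all check out and do give $c_2=2,2,3$ (and the choice of a $\beta$-plane rather than an $\alpha$-plane matters: the latter gives $c_2=1$ for $n=4,5$). The identification of $|\sigma^*L-E_S|$ with the linear projection away from $\langle S\rangle=S$, hence of the target as $\pr^n$, is also fine granted linear normality of $X\subset\pr^{n+3}$.

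There are, however, two genuine gaps. First, the existence of a $\beta$-plane on $X$ for $n=4,5$: your count only shows the expected dimension $3n-12$ is nonnegative, and for $n=4$ it is zero, so nonemptiness does not follow from the count; ``one checks'' conceals a real step (one must show the incidence correspondence of pairs $(W,\Lambda)$ with $\langle S_W\rangle\subset\Lambda$ dominates the space of linear sections, or exhibit an explicit smooth section containing a $\beta$-plane). Second, and more seriously, the entire second half of the theorem --- that $\pi$ is a divisorial contraction onto $\pr^n$ whose center $C$ is smooth, lies in the hyperplane $H=\pi_*E_S$, has degree three, and is the stated scroll with the stated embedding --- is asserted via ``variety of minimal degree'' and explicitly deferred to ``where the explicit Grassmannian geometry does the real work.'' That deferred step is precisely the content of parts (1)--(3) beyond the existence of $S$; as written you have not even established that $\pi$ contracts a divisor rather than being small or of fiber type. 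A concrete way to close it: $\tilde X$ is Fano with $\rho(\tilde X)=2$, so it has exactly one extremal contraction besides $\sigma$; computing $((\sigma^*L-E_S)^{\cdot n})=1$ and writing $-K_{\tilde X}=(n+1)(\sigma^*L-E_S)-2(\sigma^*L-2E_S)$ identifies that contraction as birational onto $\pr^n$ with exceptional divisor $F\sim\sigma^*L-2E_S$ of discrepancy $2$ over a center of codimension three, after which $F\to C$ and $E_S\cong\Bl_CH$ must still be identified on the Grassmannian model. As it stands the proposal is a correct outline whose central step is unproven.
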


\section{Basic semi test configurations via submanifolds}\label{btc_section}

In this section, we construct specific semi test configurations from submanifolds under 
some extra conditions. 
The strategy for the construction 
is essentially in the same way as that in \cite[\S 3]{fjt}. 
In this section, we fix the following condition: 

\begin{assumption}\label{fg_assump}
Let $X$ be a Fano manifold of dimension $n$, let $S\subset X$ be a smooth subvariety 
of codimension $d$ corresponds to an ideal sheaf $I_S\subset \sO_X$, let 
$\sigma\colon\tilde{X}\to X$ be the blowup along $S$ and let $E_S$ be the exceptional 
divisor. We assume that the $\Z^{\oplus 2}_{\geq 0}$-graded $\C$-algebra
\[
\bigoplus_{k,j\geq 0}H^0\left(\tilde{X}, \sO_{\tilde{X}}\left(k\sigma^*(-K_X)
-jE_S\right)\right)
\]
is finitely generated. 
\end{assumption}

\begin{remark}\label{BCHM_rmk}
If $\tilde{X}$ is a Fano manifold, then the above $\C$-algebra is finitely generated 
by \cite[Corollary 1.3.2]{BCHM}. 
\end{remark}

\subsection{Geography of models}\label{geog_section}

We study the theory of ``geography of models" introduced in \cite{shokurov}. 
We use the notations in \cite{KKL} (see also \cite[\S 2.2]{fjt}).

\begin{definition}\label{et_dfn}
Under Assumption \ref{fg_assump}, set
\[
\tau(S):=\max\{\tau\in\R_{>0}\,\,|\,\,\sigma^*(-K_X)-\tau E_S\in
\overline{\Eff}(\tilde{X})\}.
\]
\end{definition}

\begin{thm}[{\cite[Theorem 4.2]{KKL}}]\label{KKL_thm}
Under Assumption \ref{fg_assump}, 
there exist 
\begin{itemize}
\item
a strictly increasing sequence of rational numbers
\[
0=\tau_0<\tau_1<\cdots<\tau_m=\tau(S)
\] 
$($in particular, $\tau(S)\in\Q_{>0}$ holds$)$, 
\item
normal projective varieties
$X_1,\dots,X_m$ with $X_1=\tilde{X}$, and 
\item
mutually distinct birational contraction maps
$\phi_i\colon\tilde{X}\dashrightarrow X_i$  
$(1\leq i\leq m)$ with $\phi_1=\id_{\tilde{X}}$
\end{itemize}
such that the following conditions are satisfied: 
\begin{itemize}
\item
for any $x\in[\tau_{i-1}, \tau_i]$, the birational contraction map $\phi_i$ is 
a semiample model 
$($see \cite[Definition 2.3]{KKL}$)$ of $\sigma^*(-K_X)-xE_S$, and 
\item
if $x\in(\tau_{i-1}, \tau_i)$, then the birational contraction map $\phi_i$ is 
the ample model 
$($see \cite[Definition 2.3]{KKL}$)$ of $\sigma^*(-K_X)-xE_S$. 
\end{itemize}
\end{thm}

\begin{definition}\label{model_dfn}
The sequence $\{(\tau_i, X_i)\}_{1\leq i\leq m}$ obtained from Theorem \ref{KKL_thm} 
is called the \emph{ample model sequence of} $(X, -K_X; I_S)$. We set 
$E_i:=(\phi_i)_*E_S$ for $1\leq i\leq m$. 
\end{definition}

\subsection{Construction of basic semi test configurations}

Under Assumption \ref{fg_assump}, the graded $\C$-algebra in 
Assumption \ref{fg_assump} is equal to 
\[
\bigoplus_{\substack{k\geq 0,\\ 0\leq j\leq k\tau(S)}}
H^0\left(X, \sO_X(-kK_X)\cdot I_S^j\right).
\]
(see \cite[Lemma 4.3.16]{L}). Indeed, $H^0(X, \sO_X(-kK_X)\cdot I_S^j)=0$ if 
$k\geq 0$ and $j>k\tau(S)$ by the definition of $\tau(S)$. 
Fix a positive integer $r\in\Z_{>0}$ such that the following hold: 
\begin{itemize}
\item
$r\tau(S)\in\Z_{>0}$, and
\item
the graded $\C$-algebra
\[
\bigoplus_{\substack{k\geq 0,\\ 0\leq j\leq kr\tau(S)}}
H^0\left(X, \sO_X(-krK_X)\cdot I_S^j\right)
\]
is generated by 
\[
\bigoplus_{0\leq j\leq r\tau(S)}H^0\left(X, \sO_X(-rK_X)\cdot I_S^j\right)
\]
as a $\C$-algebra. 
\end{itemize}
If $r\in\Z_{>0}$ is sufficiently divisible, then the $r$ satisfies the above conditions. 
We remark that the $\C$-algebra
\[
\bigoplus_{k\geq 0}H^0\left(X, \sO_X(-krK_X)\right)
\]
is generated by $H^0(X, \sO_X(-rK_X))$ by the choice of $r$. This implies that 
the divisor $-rK_X$ is very ample.

From now on, we construct a semi test configuration of $(X, -rK_X)$ from $S$ and $r$. 
For any $j\in\Z_{\geq 0}$, we set the coherent ideal sheaf $I_j\subset\sO_X$ 
defined by the image of the following: 
\begin{eqnarray*}
H^0\left(X, \sO_X(-rK_X)\cdot I_S^j\right)\otimes_\C\sO_X(rK_X)\to\sO_X.
\end{eqnarray*}
The ideal sheaf $I_j\subset\sO_X$ is nothing but the base ideal of the 
sub linear system of $|-rK_X|$ corresponds to the inclusion
\[
H^0(X, \sO_X(-rK_X)\cdot I_S^j)\subset H^0(X, \sO_X(-rK_X)).
\]
For $k\in\Z_{>0}$ and $j\in\Z_{\geq 0}$, we define 
\[
J_{(k, j)}:=\sum_{\substack{j_1+\cdots+j_k=j,\\ j_1,\dots,j_k\geq 0}}I_{j_1}\cdots I_{j_k}
\subset\sO_X.
\]

\begin{lemma}[{cf.\ \cite[Lemma 3.3]{fjt}}]\label{gen_lem}
The above ideal sheaf $J_{(k, j)}\subset\sO_X$ is equal to 
the image of the following:
\begin{eqnarray*}
H^0\left(X, \sO_X(-krK_X)\cdot I_S^j\right)\otimes_\C\sO_X(krK_X)\to\sO_X.
\end{eqnarray*}
In particular, 
\[
H^0\left(X, \sO_X(-krK_X)\cdot I_S^j\right)
=H^0\left(X, \sO_X(-krK_X)\cdot J_{(k, j)}\right)
\]
holds as subspaces of $H^0(X, \sO_X(-krK_X))$.
\end{lemma}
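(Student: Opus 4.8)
The plan is to prove the ideal equality $J_{(k,j)}=B_{(k,j)}$, where I write $B_{(k,j)}\subset\sO_X$ for the image of the displayed map, i.e.\ the base ideal of the sub-linear system $V_{(k,j)}:=H^0(X,\sO_X(-krK_X)\cdot I_S^j)$ inside $H^0(X,\sO_X(-krK_X))$; the ``In particular'' assertion will then drop out formally. I establish the two inclusions separately, the delicate one being $B_{(k,j)}\subseteq J_{(k,j)}$, which is exactly where the generation hypothesis on $r$ is used.

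For $J_{(k,j)}\subseteq B_{(k,j)}$ I would argue locally. Over an open set on which $\sO_X(rK_X)$ is trivialized, $I_{j_\ell}$ is by definition generated by the resulting regular functions attached to sections of $H^0(X,\sO_X(-rK_X)\cdot I_S^{j_\ell})$. Given a partition $j_1+\cdots+j_k=j$ and sections $s_\ell\in H^0(X,\sO_X(-rK_X)\cdot I_S^{j_\ell})$, the product $s_1\cdots s_k$ is a section of $\sO_X(-krK_X)\cdot I_S^{j}$, because $I_S^{j_1}\cdots I_S^{j_k}=I_S^{j}$; hence the corresponding product of local functions lies in $B_{(k,j)}$. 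Summing over all partitions gives $I_{j_1}\cdots I_{j_k}\subseteq B_{(k,j)}$, and therefore $J_{(k,j)}\subseteq B_{(k,j)}$.

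The reverse inclusion is the crux. If $j>kr\tau(S)$ then $V_{(k,j)}=0$ by the definition of $\tau(S)$ (applied with $kr$ in place of $k$), so $B_{(k,j)}=0$ and there is nothing to prove. If $0\le j\le kr\tau(S)$, I would identify $V_{(k,j)}$ with $H^0(\tilde{X},\sO_{\tilde{X}}(kr\sigma^*(-K_X)-jE_S))$ as in \cite[Lemma 4.3.16]{L}, where the bigrading by $(k,j)$ is genuine on $\tilde{X}$. The generation hypothesis says precisely that this piece is spanned by products $s_1\cdots s_k$ of sections $s_\ell\in H^0(\tilde{X},\sO_{\tilde{X}}(r\sigma^*(-K_X)-j_\ell E_S))$, where the second indices must satisfy $j_1+\cdots+j_k=j$ \emph{exactly}, since multiplying by the canonical section of $\sO_{\tilde{X}}(E_S)$ keeps the $E_S$-multiplicities additive. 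Transporting back to $X$ along $\sigma$, any $\xi\in V_{(k,j)}$ is thus a $\C$-linear combination of products $t_1\cdots t_k$ with $t_\ell\in H^0(X,\sO_X(-rK_X)\cdot I_S^{j_\ell})$ and $\sum_\ell j_\ell=j$. Trivializing $\sO_X(krK_X)$ locally, the function attached to $\xi$ becomes a combination of products of the functions attached to the $t_\ell$, each product lying in $I_{j_1}\cdots I_{j_k}\subseteq J_{(k,j)}$; hence $B_{(k,j)}\subseteq J_{(k,j)}$.

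Finally, for the cohomological equality I would combine two trivial containments: by the very definition of the base ideal $B_{(k,j)}=J_{(k,j)}$ one has $V_{(k,j)}\subseteq H^0(X,\sO_X(-krK_X)\cdot J_{(k,j)})$, while $J_{(k,j)}\subseteq I_S^{j}$ (each $I_{j_\ell}\subseteq I_S^{j_\ell}$, so $I_{j_1}\cdots I_{j_k}\subseteq I_S^{j}$) yields the reverse inclusion $H^0(X,\sO_X(-krK_X)\cdot J_{(k,j)})\subseteq V_{(k,j)}$. I expect the only genuinely subtle point to be the exact additivity $j_1+\cdots+j_k=j$ in the generation step: it is what forces each product to land in the correct graded piece, and it relies on reading the generation hypothesis through the honest bigrading on $\tilde{X}$ rather than through the nested subspaces of $H^0(X,\sO_X(-krK_X))$.
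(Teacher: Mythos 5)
Your proposal is correct and follows essentially the same route as the paper: the key step in both is that the choice of $r$ makes the multiplication map $\bigoplus_{j_1+\cdots+j_k=j}V_{1,j_1}\otimes_\C\cdots\otimes_\C V_{1,j_k}\to V_{k,j}$ surjective (read through the honest bigrading on $\tilde{X}$, which is exactly the point you flag), after which the identification of $J_{(k,j)}$ with the base ideal of $V_{k,j}$ follows from the definition of the $I_{j_\ell}$. Your write-up merely makes explicit the two inclusions and the degenerate case $j>kr\tau(S)$, which the paper leaves implicit.
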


\begin{proof}
We set 
\[
V_{k,j}:=H^0(X, \sO_X(-krK_X)\cdot I_S^j)
\]
for simplicity. Note that the homomorphism 
\[
\bigoplus_{\substack{j_1+\cdots+j_k=j,\\ j_1,\dots,j_k\geq 0}}
V_{1, j_1}\otimes_\C\cdots\otimes_\C V_{1, j_k}\to V_{k, j}
\]
is surjective by the choice of $r\in\Z_{>0}$. For any $1\leq i\leq k$, 
recall that the ideal sheaf $I_{j_i}$ is equal to 
the image of the homomorphism 
\[
V_{1, j_i}\otimes_\C\sO_X(rK_X)\to \sO_X.
\]
Thus we get the assertion. 
\end{proof}

We set the flag ideal $\sI$ of the form
\[
\sI:=I_{r\tau(S)}+I_{r\tau(S)-1}t^1+\cdots+ I_1t^{r\tau(S)-1}+(t^{r\tau(S)})
\subset\sO_{X\times\A^1_t}.
\]
(Since 
\[
0=I_{r\tau(S)+1}\subset I_{r\tau(S)}\subset\cdots\subset I_1\subset I_0=\sO_X,
\]
the $\sI$ is actually an ideal sheaf.)
For any $k\in\Z_{>0}$, we have the equality
\[
\sI^k=J_{(k, kr\tau(S))}+J_{(k, kr\tau(S)-1)}t^1+\cdots+J_{(k, 1)}t^{kr\tau(S)-1}
+(t^{kr\tau(S)})
\]
by the definition of $J_{(k,j)}$.
Let $\Pi\colon\sB\to X\times\A^1$ be the blowup along $\sI$ and 
$E_\sB\subset\sB$ be the Cartier divisor given by the equation 
$\sO_\sB(-E_\sB)=\sI\cdot\sO_\sB$. Set 
$\sL:=\Pi^*p_1^*\sO_X(-rK_X)\otimes\sO_\sB(-E_\sB)$. 

\begin{lemma}[{cf.\ \cite[Lemma 3.4]{fjt}}]\label{stc_lem}
$(\sB, \sL)/\A^1$ is a semi test configuration of the pair $(X, -rK_X)$. 
\end{lemma}

\begin{proof}
Set $\alpha:=p_2\circ\Pi\colon\sB\to\A^1$. 
It is enough to show that $\sL$ is $\alpha$-semiample. 
For any $k\in\Z_{>0}$ and $j\in\Z_{\geq 0}$, the homomorphism 
\[
H^0(X, \sO_X(-krK_X)\cdot J_{(k, j)})\otimes_\C\sO_X\to\sO_X(-krK_X)\cdot J_{(k, j)}
\]
is surjective by Lemma \ref{gen_lem}. This implies that the homomorphism 
\[
H^0\left(X\times\A^1, p_1^*\sO_X(-krK_X)\cdot\sI^k\right)\otimes_{\C[t]}
\sO_{X\times\A^1}\to p_1^*\sO_X(-krK_X)\cdot \sI^k
\]
is surjective for any $k\in\Z_{>0}$. 
From \cite[Lemma 5.4.24]{L}, we have 
\begin{eqnarray*}
\alpha^*\alpha_*\sL^{\otimes k}&\simeq& 
\alpha^*(p_2)_*(p_1^*\sO_X(-krK_X)\cdot\sI^k)\\
&=&\Pi^*\left(H^0\left(X\times\A^1, p_1^*\sO_X(-krK_X)\cdot\sI^k\right)
\otimes_{\C[t]}\sO_{X\times\A^1}\right)\\
&\twoheadrightarrow&\Pi^*\left(p_1^*\sO_X(-krK_X)\cdot\sI^k\right)\\
&\twoheadrightarrow&
\Pi^*p_1^*\sO_X(-krK_X)\otimes\sO_\sB(-kE_\sB)=\sL^{\otimes k} 
\end{eqnarray*}
for $k\gg 0$. 
This implies that $\sL$ is $\alpha$-semiample. 
\end{proof}

\begin{definition}\label{divst_dfn}
We say the above semi test configuration $(\sB, \sL)/\A^1$ the 
\emph{basic semi test configuration of} $(X, -rK_X)$ \emph{via} $S$. 
\end{definition}

\subsection{Calculating the Donaldson-Futaki invariants}\label{calc_section}

Let $(\sB, \sL)/\A^1$ be the semi test configuration of $(X, -rK_X)$ 
via $S$. Then the value $w(k)$ in Definition \ref{K_dfn} \eqref{K_dfn3} is equal to
\begin{eqnarray*}
-kr\tau(S)h^0\left(X, \sO_X(-krK_X)\right)+\sum_{j=1}^{kr\tau(S)}
h^0\left(X, \sO_X(-krK_X)\cdot J_{(k, j)}\right).
\end{eqnarray*}
For $k\gg 0$ sufficiently divisible, by \cite[Remark 2.4 (i)]{KKL} and 
\cite[Proposition 4.1]{fjt}, we have 
\begin{eqnarray*}
&&\sum_{j=1}^{kr\tau(S)}h^0\left(X, \sO_X(-krK_X)\cdot J_{(k, j)}\right)\\
&=&\sum_{j=1}^{kr\tau(S)}h^0\left(\tilde{X}, \sO_{\tilde{X}}
(kr\sigma^*(-K_X)-jE_S)\right)\\
&=&\sum_{i=1}^m\sum_{j=kr\tau_{i-1}+1}^{kr\tau_i}h^0\left(X_i, \sO_{X_i}\left(
kr\left(-K_{X_i}+(d-1)E_i\right)-jE_i\right)\right)\\
&=&\sum_{i=1}^m\Bigl(\frac{(kr)^{n+1}}{n!}\int_{\tau_{i-1}}^{\tau_i}
\left(\left(-K_{X_i}+(d-1)E_i-xE_i\right)^{\cdot n}\right)dx\\
&-&\frac{(kr)^n}{2\cdot (n-1)!}\int_{\tau_{i-1}}^{\tau_i}
\left(\left(-K_{X_i}+(d-1)E_i-xE_i\right)^{\cdot n-1}\cdot(K_{X_i}+E_i)\right)dx\Bigr)\\
&+&O(k^{n-1}).
\end{eqnarray*}

Thus we have 
\[
\DF(\sB, \sL)=\frac{r^{2n}((-K_X)^{\cdot n})}{2\cdot(n!)^2}\eta(S), 
\]
where 
\[
\eta(S):=n\sum_{i=1}^m\int_{\tau_{i-1}}^{\tau_i}
(d-x)\left(\left(-K_{X_i}+(d-1-x)E_i\right)^{\cdot n-1}\cdot E_i\right)dx.
\]

Hence we have proved the following: 

\begin{thm}\label{eta_thm}
Under Assumption \ref{fg_assump}, we further assume that the pair 
$(X, -K_X)$ is K-stable $($resp.\ K-semistable$)$. Then we have 
$\eta(S)>0$ $($resp.\ $\eta(S)\geq 0)$. 
\end{thm}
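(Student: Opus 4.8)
The plan is to read off the sign of $\eta(S)$ directly from the definition of K-(semi)stability applied to the \emph{basic} semi test configuration constructed above. All the substantive work—the identification of the base ideals via Lemma \ref{gen_lem}, the verification in Lemma \ref{stc_lem} that the construction is a genuine semi test configuration, and the asymptotic computation of $w(k)$—has already been carried out; what remains is merely to package it correctly against Definition \ref{K_dfn}.

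First I would fix a positive integer $r$ that is sufficiently divisible so that the two displayed conditions preceding Lemma \ref{gen_lem} hold, and form the basic semi test configuration $(\sB, \sL)/\A^1$ of $(X, -rK_X)$ via $S$ from Definition \ref{divst_dfn}. By Lemma \ref{stc_lem} this is a semi test configuration in the sense of Definition \ref{K_dfn}\eqref{K_dfn2}: $\Pi$ is the blowup along the flag ideal $\sI$, and $\sL$ is $\alpha$-semiample. The remaining admissibility requirement, that $\sI$ is not of the form $(t^M)$, holds because $S$ is a proper subvariety and $-rK_X$ is very ample, so the base ideal $I_1$ (the coefficient of $t^{r\tau(S)-1}$ in $\sI$) is a nonzero proper ideal sheaf; hence $\sI\supsetneq(t^{r\tau(S)})$.

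Next I would invoke the formula
\[
\DF(\sB, \sL)=\frac{r^{2n}((-K_X)^{\cdot n})}{2\cdot(n!)^2}\eta(S)
\]
established above, and observe that its leading factor is strictly positive: $r\in\Z_{>0}$, the self-intersection number $((-K_X)^{\cdot n})$ is positive since $-K_X$ is ample on the $n$-dimensional manifold $X$, and $2\cdot(n!)^2>0$. I would also note that $\eta(S)$ depends only on the ample model sequence $\{(\tau_i, X_i)\}$ of $(X, -K_X; I_S)$ and not on the auxiliary choice of $r$, so that its sign is intrinsic to $S$; thus the particular divisible $r$ chosen above is immaterial.

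Finally I would apply Definition \ref{K_dfn}\eqref{K_dfn4}. Since the definition of K-stability (resp.\ K-semistability) demands $\DF>0$ (resp.\ $\geq 0$) for \emph{every} choice of $r$, flag ideal, and semi test configuration, it applies in particular to our $(\sB, \sL)/\A^1$. Dividing the resulting inequality $\DF(\sB, \sL)>0$ (resp.\ $\geq 0$) by the positive constant $r^{2n}((-K_X)^{\cdot n})/(2\cdot(n!)^2)$ yields $\eta(S)>0$ (resp.\ $\geq 0$), as claimed. There is no genuine obstacle here: the only step deserving attention is confirming that the basic semi test configuration is admissible—in particular the non-degeneracy $\sI\neq(t^M)$—and this is already secured by the construction together with Lemma \ref{stc_lem}, so no further estimate is needed.
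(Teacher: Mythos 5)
Your proposal is correct and follows exactly the route the paper takes: the theorem is read off from the identity $\DF(\sB,\sL)=\frac{r^{2n}((-K_X)^{\cdot n})}{2\cdot(n!)^2}\eta(S)$ for the basic semi test configuration, together with Definition \ref{K_dfn}\eqref{K_dfn4} and the positivity of the leading factor. Your extra remark verifying that $\sI$ is not of the form $(t^M)$ is a sensible (if routine) addition that the paper leaves implicit.
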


\begin{remark}\label{vol_rmk}
We can show (see \cite[Theorem 5.2]{fjt}) that 
\[
\eta(S)=d\cdot \vol_X(-K_X)-\int_0^{\tau(S)}\vol_{\tilde{X}}(\sigma^*(-K_X)-xE_S)dx, 
\]
where $\vol_X$ is the volume function \cite[Corollary 2.2.45]{L}. 
(We do not use this equality.) 
\end{remark}

\section{Proof of Theorem \ref{mainthm}}\label{proof_section}

Let $X\subset\pr^{n+3}$ 
be a del Pezzo manifold of degree five and of dimension $4\leq n\leq 6$. 
We take a plane $S\subset X$, morphisms $\sigma\colon\tilde{X}\to X$ and 
$\pi\colon\tilde{X}\to \pr^n$ as in Theorem \ref{dP_thm}. 
Since $\tilde{X}$ is a Fano manifold with $\rho(\tilde{X})=2$, $X$ and $S$ satisfies 
Assumption \ref{fg_assump} by Remark \ref{BCHM_rmk}. 
Let $\{(\tau_i, X_i)\}_{1\leq i\leq m}$ be the ample model sequence of $(X, -K_X; I_S)$. 
Then $X_1=\tilde{X}$, $\tau_1=n-1$, $X_2=\pr^n$, $E_2$ is a hyperplane on $X_2$, 
$\tau_2=2n-2$ and $m=2$. 
We note that $E_S=\pr_S(\sN_{S/X}^\vee)$. Let $\xi_E$, $L_E$ be a divisor on $E$ 
corresponds to a tautological line bundle of $\pr_S(\sN_{S/X}^\vee)/S$, the pullback 
of $\sO_{\pr^2}(1)$ on $S$, respectively. 
Then we have 
\[
\sO_{\tilde{X}}(-K_{\tilde{X}}+(n-3-x)E_S)|_{E_S}\simeq\sO_{E_S}(x\xi_E+(n-1)L_E). 
\]
Since 
\begin{eqnarray*}
&&\left(\left(\sO_{E_S}(x\xi_E+(n-1)L_E)\right)^{\cdot n-1}\right)\\
&=&(-c_2(\sN_{S/X})+(n-4)^2)x^{n-1}-(n-1)^2(n-4)x^{n-2}\\
&+&\frac{1}{2}(n-1)^3(n-2)x^{n-3}, 
\end{eqnarray*}
we have 
\begin{eqnarray*}
&&\int_0^{n-1}(n-2-x)\left(\left(\sO_{E_S}(x\xi_E+(n-1)L_E)\right)^{\cdot n-1}\right)dx\\
&=&(n-1)^n\frac{2c_2(\sN_{S/X})+(n-4)(9-n)}{n(n+1)}.
\end{eqnarray*}
On the other hand, 
we have 
\begin{eqnarray*}
\int_{n-1}^{2n-2}(n-2-x)\left(\left(\sO_{\pr^n}(2n-2-x)\right)^{\cdot n-1}
\cdot\sO_{\pr^n}(1)\right)dx
=\frac{-2\cdot(n-1)^n}{n+1}.
\end{eqnarray*}
Therefore, we have 
\[
\eta(S)=\frac{(n-1)^n}{n+1}\left(2c_2(\sN_{S/X})+(n-4)(9-n)-2n\right). 
\]
If $n=4$ or $5$, then $\eta(S)<0$. If $n=6$, then $\eta(S)=0$. 

As a consequence, we have proved Theorem \ref{mainthm}.

\begin{remark}\label{betti_rmk}
From Remark \ref{mainrmk}, 
Theorem \ref{dP_thm} and the theorem of Lefschetz (see also 
\cite[Lemma 10.23]{F}), we can check that 
the fourth Betti number $b_4(X)$ of a del Pezzo manifold $X$ of dimension $n$ and 
degree five is equal to $2$ if $4\leq n\leq 6$. The author does not know any 
Fano manifold $X$ with $(X, -K_X)$ K-unstable and 
$b_{2i}(X)=1$ for any $0\leq i\leq\dim X$. 
\end{remark}

\begin{remark}\label{fjt_rmk}
In the article \cite{fjt}, we treat ``divisorial stability" of Fano manifolds which is a 
necessary condition of K-stability. 
The $X$ in Theorem \ref{mainthm} is divisirially stable by \cite[Corollary 9.3]{fjt}. 
However, Theorem \ref{mainthm} says that the pair $(X, -K_X)$ is not K-stable. 
This implies that K-(semi)stability is strictly stronger than divisorial (semi)stability 
for Fano manifolds.
See also \cite[Remark 9.4]{fjt}.
\end{remark}

\end{document}